\theoremstyle{plain}
\newtheorem{thm}{Theorem}[section]
\newtheorem{lemma}[thm]{Lemma}
\newtheorem{prop}[thm]{Proposition}
\newtheorem{cor}[thm]{Corollary}
\newtheorem*{thm*}{Theorem}
\newtheorem*{lemma*}{Lemma}
\newtheorem*{prop*}{Proposition}
\newtheorem*{cor*}{Corollary}
\newtheorem*{conj*}{Conjecture}
\theoremstyle{definition}
\newtheorem{defn}[thm]{Definition}
\newtheorem{ex}[thm]{Example}
\newtheorem{alg}[thm]{Algorithm}
\theoremstyle{remark}
\newcommand{\calb}{\mathcal{B}}
\newcommand{\calc}{\mathcal{C}}
\newcommand{\cale}{\mathcal{E}}
\newcommand{\mast}{\mathrm{MAST}}
\newcommand{\ind}{\mbox{$\perp \kern-5.5pt \perp$}}
\newcommand{\E}{\mathrm{E}}
\begin{document}

\title[Expected size of the maximum agreement subtree for a given shape]{Bounds on the expected size of the maximum agreement subtree for a given tree shape}
\author{Pratik Misra, Seth Sullivant }

\address{Department of Mathematics
\\North Carolina State University, Raleigh, NC, USA\\}

\email{smsulli2@ncsu.edu, pmisra@ncsu.edu}

\keywords{maximum agreement subtree,  exchangeability, sampling consistency}
\begin{abstract}
    
We show that the expected size of the maximum agreement subtree of two $n$-leaf trees,
uniformly random among all trees with the shape, is  $\Theta(\sqrt{n})$. 
To derive the lower bound, we prove a global structural result on a decomposition of 
rooted binary trees into subgroups of leaves called \textit{blobs}. To obtain the upper bound,
we generalize a first moment argument from \cite{Bernstein2015} for random tree distributions
that are exchangeable and not necessarily sampling consistent.
\end{abstract}

\maketitle

\section{Introduction}
Rooted binary trees are used in evolutionary biology to represent 
the evolution of a set of species where the leaves denote the existing 
species and the internal nodes denote the unknown ancestors. Biologists 
believe that there exists a single tree which can describe the evolution of all  living species. 
The study of methods to reconstruct evolutionary trees from biological data is 
the area called phylogenetics \cite{Felsenstein2004,Semple2003}.
Different tree reconstruction methods, and different datasets on the same set of
species, can lead to the reconstruction of different trees. In such cases, it 
is important to measure the distance between different trees constructed.
There are various distances between trees that are used including
Robinson-Foulds distance, distances based on tree rearrangements, and
the geodesic distance.   This paper focuses on the   \textit{maximum agreement subtree}
as a measure of discrepancy between trees.  

If $T$ is a rooted binary tree with $n$ leaves leaf 
labeled by $[n] = \{1,2, \ldots, n\}$ and $S$ is a subset of $[n]$,
then the \emph{binary restriction tree} $T|_S$ is defined as the 
subtree of $T$ obtained after deleting all the leaves that are not in $S$ 
and suppressing the internal nodes of degree $2$. The new tree $T|_S$ is rooted at the
most recent common ancestor of the set $S$. 
If $T_1$ and $T_2$ are two trees leaf labeled by $X$, 
then a subset $S\subseteq X$ is said to be an agreement set of 
$T_1$ and $T_2$ if $T_1|_S=T_2|_S$. A \emph{maximum agreement subtree} is a subtree that 
is obtained from an agreement set of $T_1$ and $T_2$ and is of maximal size.
Figures \ref{fig:tree} and \ref{fig:mast} give an example of two trees
and a maximum agreement subtree.

\begin{figure} 
\begin{center}
  \resizebox{!}{4cm}{
\includegraphics{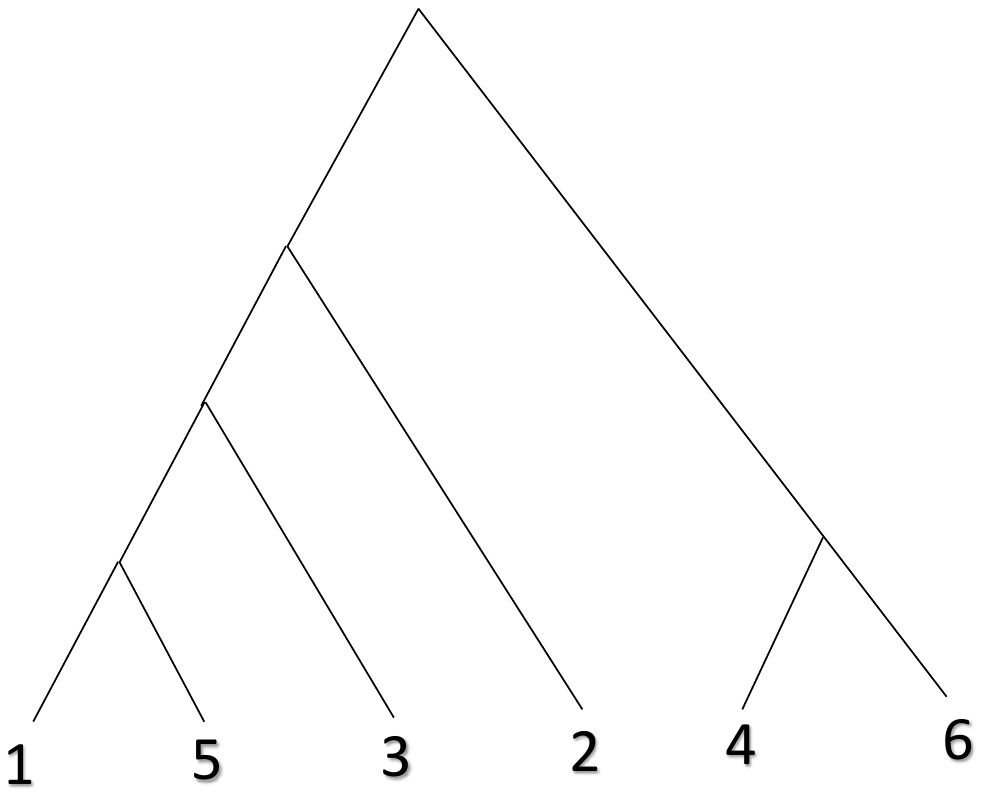}} \quad \quad   \resizebox{!}{4cm}{
\includegraphics{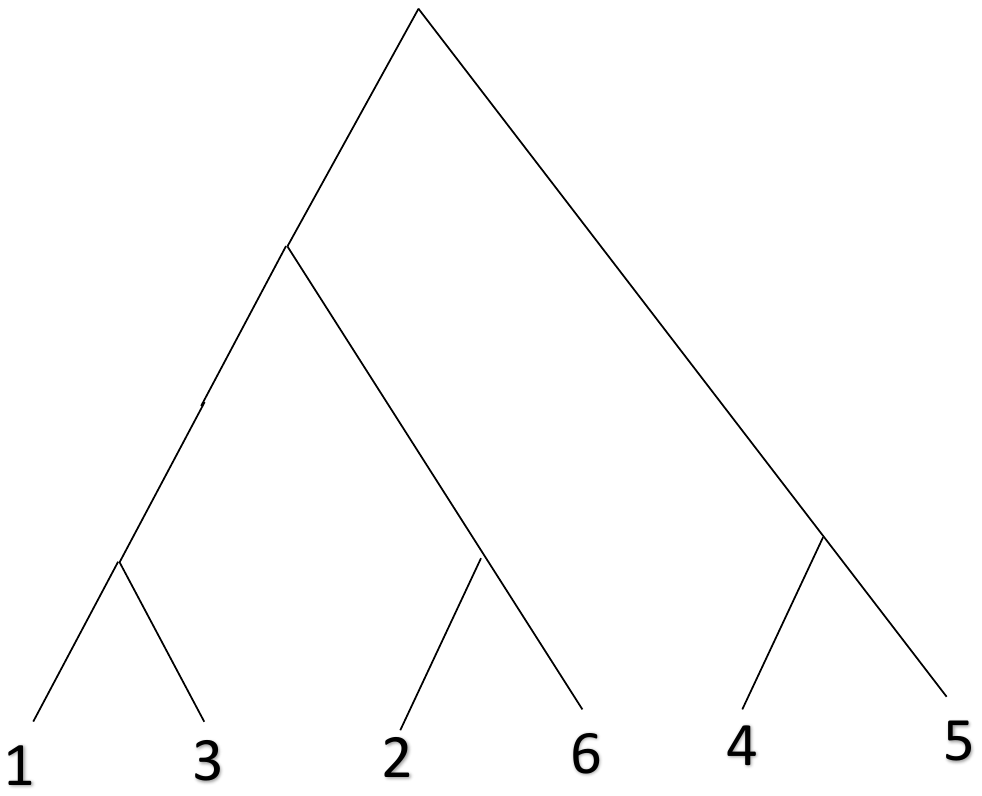}} 

\caption{\label{fig:tree} Two rooted trees $T_1$ and $T_2$ }

\resizebox{!}{4cm}{
\includegraphics{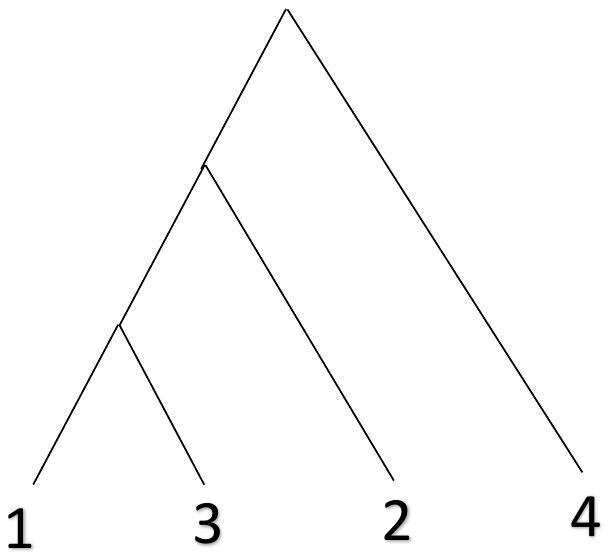}}   
\end{center}
\caption{\label{fig:mast} A maximum agreement subtree for $T_1$ and $T_2$}
\end{figure}

A maximum agreement subtree of a pair of binary trees can be computed in polynomial 
time in $n$ \cite{Steel1993}. 
Let $\mast(T_1,T_2)$ denote the number of leaves of a maximum agreement subtree 
of $T_1 $ and $T_2$. We know from \cite{Martin2013} that if $T_1$ and $T_2$ are any
unrooted binary trees with $n$ leaves, then $\mast(T_1,T_2)=\Omega(\sqrt{\log n})$.  
This contrasts with the rooted case where there can be pairs of rooted trees where
$\mast(T_1,T_2) = 2$.  
Martin and Thatte \cite{Martin2013}  also conjectured that 
if $T_1$ and $T_2$ are balanced rooted  binary
trees with $n$ leaves, then $\mast(T_1, T_2) \geq \sqrt{n}$.  

For the purposes of hypothesis testing,  it is important to understand the distribution
of $\mast(T_1, T_2)$ for trees generated from reasonable distributions
of random trees.     Simulations by Bryant, McKenzie, and Steel \cite{Bryant2003} 
suggest that under the 
uniform and Yule Harding distribution on the rooted binary trees with $n$ leaves, 
the expected size of  $\mast(T_1, T_2)$ is of the order $\Theta(n^a)$ with $a\approx 1/2$. 
It is known that for any sampling consistent and exchangeable distribution on 
rooted binary trees with $n$ leaves (including the uniform and Yule-Harding distributions), 
the expected size of the maximum agreement subtrees 
is less than $\lambda \sqrt{n}$ (for some constant $\lambda >e\sqrt{2}$) \cite{Bernstein2015}.
Lower bounds of order $c n^\alpha$ are also shown in \cite{Bernstein2015} for the 
Yule-Harding and the uniform distribution.
 
In this paper, we study the distribution of $\mast(T_1,T_2)$ where $T_1$ and $T_2$
are trees that are uniformly sampled from all trees with the same shape.  In 
other words, $T_2$ is obtained from $T_1$ by applying a random permutation of
the leaf labels.  In this sense, this gives us a randomized version of 
Martin and Thatte's conjecture for the  case of balanced trees.  We prove that
$\E[\mast(T_1,T_2)] = \Theta(\sqrt{n})$ in this case, which both provides evidence
for Martin and Thatte's conjecture, and provides some further evidence towards the
problems posed in \cite{Bryant2003} for random trees.  Our proof of the
lower bound is based on a structural result about general trees where
we decompose arbitrary trees into substructures we call \emph{blobs}.  The
proof of the upper bound is based on a strengthening of the previously mentioned result
of \cite{Bernstein2015}.  We also show results of simulations that suggest that our ideas
based on blobs could be used to improve lower bounds on the expected value of
$\mast(T_1,T_2)$ for other distributions of random trees.


\section{Lower Bound:  Blobification}
 
In this section we derive a lower bound on the expected size 
of the maximum agreement subtree of two uniformly random trees on $n$ leaves 
with same tree shape. We do this by dividing the trees into what we call as \textit{blobs}, 
which helps us in constructing an agreement subtree between the two trees.

Let $T$ be a rooted binary tree leaf-labeled by $[n]$.
A \emph{cherry blob} is a set of leaves in $T$ consisting of all leaves below a vertex in the
tree. Cherry blobs are also called clades in other phylogenetic contexts.  
An \emph{edge blob} is a nonempty set of leaves of the form $C_1 \setminus C_2$
where $C_1$ and $C_2$ are two nonempty cherry blobs. 
A \emph{blob} in $T$ is either a cherry blob or an edge blob. 

\begin{defn}
Given an integer $k$ and a tree $T$,  a $k$-\emph{blobification} of $T$ is
a collection $\mathcal{B}$ of blobs of $T$ such that,
for all distinct blobs $B_1, B_2 \in \calb$,  $B_1 \cap B_2 = \emptyset$ and
for all $B \in \calb$,  $k \leq |B| \leq 2k-2$.  
\end{defn}

\begin{defn}
Let $T$ be a binary tree, and $\calb$ a $k$-blobification.  
Let $S$ be a set of leaves consisting of one element from each of the blobs in $\calb$.
The \emph{scaffold tree} of the blobification is the unlabelled tree
 $T'$ obtained as the unlabelled version of the induced tree $T|_S$.
\end{defn}

Let $T$ be any rooted binary leaf-labeled tree with $n$ leaves.
We construct a $k$-blobification $\calb$ of $T$ using the following greedy procedure.

First, throw in as many cherry blobs into $\calb$ as possible.
Specifically, among all the cherry blobs $C$ with $k \leq |C| \leq 2k-2$,
we can take the set $\calc$ to consist of all of those cherry blobs
that are minimal, i.e.~that
is, that do not contain any other cherry blobs that have between $k$ and $2k -2$
leaves.

The set of cherry blobs we have constructed $\calc$ induces
a labeled tree that we call the \emph{prescaffold tree}.  This
tree has as leaves all the elements of $\calc$, and can be obtained as an (unlabeled version of
the)
induced subtree $T|_S$ where $S$ is any set of leaves that contain exactly one leaf
from each of the cherry blobs in $\calc$.
If the root of $T|_S$ is not the root of $T$, then we also add an edge onto the
prescaffold tree at the root.  This is illustrated in Figure \ref{fig:scaffold}.
Now we can think about the tree $T$ as consisting of all the leaves 
grouped into blobs of various sizes, each of which attaches somewhere onto the
prescaffold tree.  The leaves that are not part of any of the cherry blobs
will belong to
blobs of size $k -1$ or less that connect onto the prescaffold tree.

On each edge of the prescaffold tree are some number of smaller blobs hanging off
of size $k - 1$ or less.
Working up from the bottom edges of the prescaffold, we can group small blobs together until
they produce an edge blob of size between $k$ and $2k-2$.  This is possible
because each of the small blobs has size $< k$, so when we are grouping blobs together
we have an edge blob with size $< k$ that we add $< k$ more elements to,
we stop when we have formed an edge blob of size between $k$ and $2k -2$.
Let $\cale$ be the resulting set of edge blobs that are produced, that all have size
between $k$ and $2k-2$.
This \emph{greedy $k$-blobification algorithm} stops with a blobification  
$\calb = \calc \cup \cale$  where on each edge of the scaffold tree
there are leftover small blobs whose total number of leftover leaves is at most $k-1$.
The set $\calb = \calc \cup \cale$ is called the \emph{greedy $k$-blobification}.

Starting with the prescaffold tree $T'$ and adding a leaf attached to an edge for
each time an edge blob gets formed, we arrive at an unlabelled tree we call the
scaffold tree.

\begin{ex}
Consider the binary tree on $17$ leaves pictured in Figure \ref{fig:blb}.
We first consider the greedy $2$-blobification.  Note that the cherry
blobs are exactly the cherries in this case.  These are the sets
$\{1,2\}, \{7,8\}, \{11,12\}, \{13, 14\}$.  The prescaffold tree
is shown on the left of Figure \ref{fig:scaffold}.
Note that there is an edge that hangs off the root.
The edge blobs in this example are $\{3,4\}, \{5,6\}, \{15, 16\}$.
The resulting scaffold tree is the tree on the right in Figure 
\ref{fig:scaffold}.  Note that leaves $9$, $10$, and $17$ do not
end up in any blob.

On the other hand, consider the greedy $3$-blobification of the same
tree.  There are two cherry blobs, $\{1,2,3\}$ and $\{11,12,13,14\}$.
The edge blobs are $\{4,5,6\}$, $\{7,8,9\}$,  and $\{15,16,17\}$.

\begin{figure} 
\begin{center}
\resizebox{!}{5.5cm}{
\includegraphics{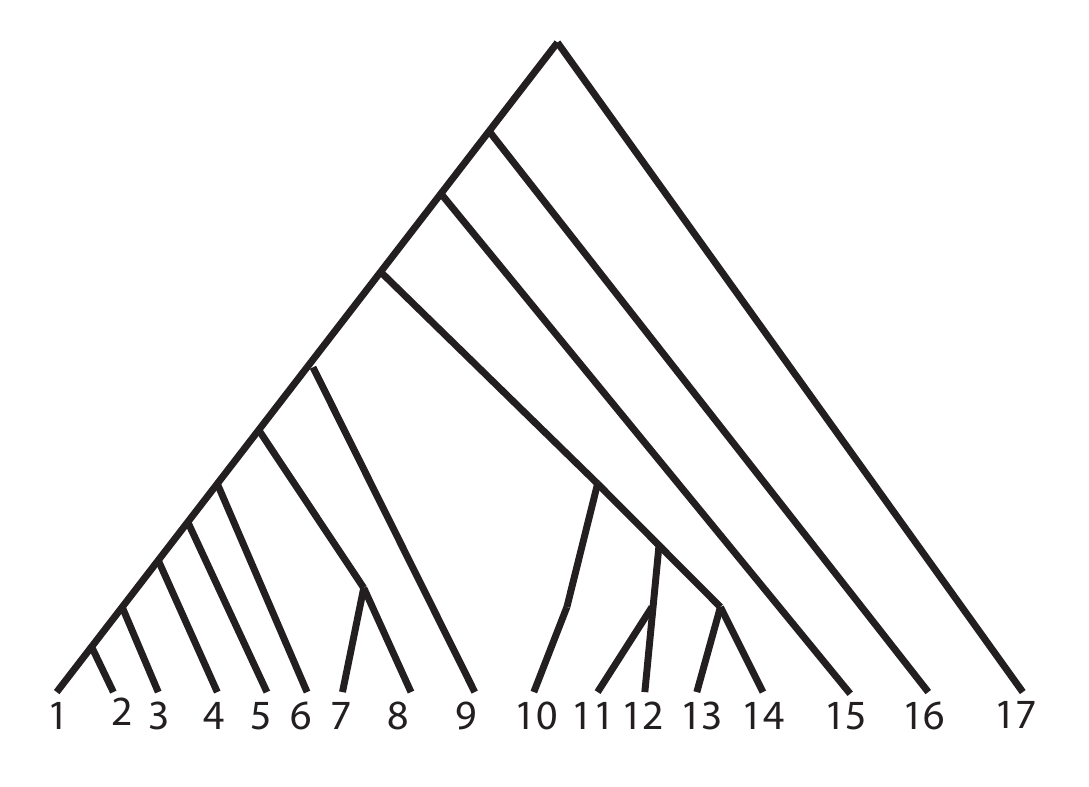}}   
\end{center}
\caption{\label{fig:blb} A tree}
\end{figure}

\begin{figure} 
\begin{center}
  \resizebox{!}{3.5cm}{
\includegraphics{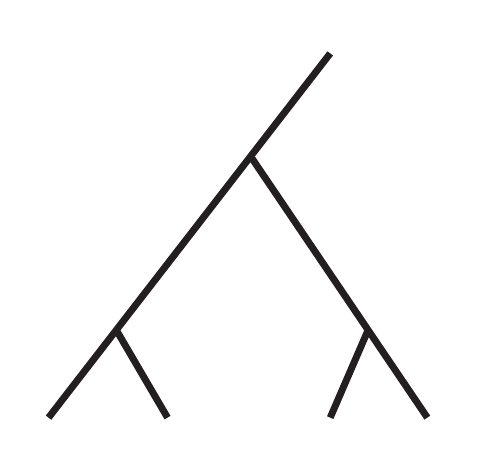}} \quad \quad   \resizebox{!}{3.5cm}{
\includegraphics{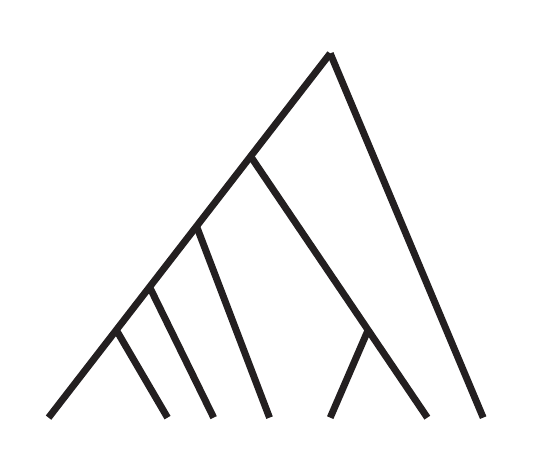}} 
\end{center}
\caption{\label{fig:scaffold} Prescaffold and scaffold tree for the 2-blobification}
\end{figure}

\end{ex}

\begin{prop}\label{prop:blob}
Let $T$ be a rooted binary leaf-labeled tree with $n$ leaves.  Then $T$ has
a $k$-blobification with at least $\frac{n}{4k}$ blobs.
\end{prop}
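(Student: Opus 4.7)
The plan is to run the greedy $k$-blobification algorithm described in the paragraphs above and bound $n$ from above by a linear function of $|\calb|$.

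First I would verify that the algorithm really produces a valid $k$-blobification. For the cherry family $\calc$: walking upward from any leaf, the cherry-blob size at an internal vertex is the sum of the two children's cherry-blob sizes, so the first vertex (moving upward) whose cherry blob reaches size $k$ has size at most $(k-1)+(k-1) = 2k-2$. Taking all minimal such vertices gives a pairwise disjoint family $\calc$ of cherry blobs, each of size in $[k, 2k-2]$. Moreover, every subtree of $T$ hanging off a prescaffold edge must have size at most $k-1$: otherwise, by the same upward-sweep argument, such a hanging subtree would itself contain a minimal cherry blob of size in $[k, 2k-2]$, contradicting the exhaustive choice of $\calc$. The edge-blob construction then runs a cumulative sum along each prescaffold edge from bottom to top, cutting off whenever the running tally reaches $k$; since each hanging subtree contributes at most $k-1$, each edge blob has size in $[k, 2k-2]$ and the leftover above the last cut on each edge is at most $k-1$.

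Next I would count leaves. Each leaf of $T$ lies in exactly one blob of $\calb = \calc \cup \cale$ or in a leftover region on some edge of the prescaffold tree. The blobs consume at most $(2k-2)|\calb|$ leaves. The prescaffold tree is a rooted binary tree on $|\calc|$ leaves, together with at most one extra edge at the root, so it has at most $2|\calc|-1 \leq 2|\calb|-1$ edges; therefore the leftovers contribute at most $(k-1)(2|\calb|-1)$ leaves. Combining,
\[
n \;\leq\; (2k-2)|\calb| \;+\; (k-1)(2|\calb|-1) \;<\; (4k-4)|\calb| \;\leq\; 4k\,|\calb|,
\]
which gives $|\calb| > n/(4k)$, as required.

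The argument is almost entirely bookkeeping; the one subtle point, and the step I would be most careful about, is the claim that every hanging subtree on the prescaffold has at most $k-1$ leaves. This is what simultaneously forces the edge blobs to have size at most $2k-2$ and keeps the leftover per prescaffold edge at most $k-1$, and it depends crucially on the fact that $\calc$ is chosen exhaustively from the minimal cherry blobs in the range $[k, 2k-2]$.
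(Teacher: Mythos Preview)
Your proof is correct and follows essentially the same approach as the paper: run the greedy $k$-blobification algorithm, then bound $n$ by the blob contributions plus the leftover contributions on the prescaffold edges. The paper's proof is terser---it does not verify that the algorithm produces a valid $k$-blobification (you supply that argument carefully), and it keeps the slightly sharper count $(k-1)(2|\calc|-1)$ for the leftovers rather than relaxing $|\calc|$ to $|\calb|$ as you do in the displayed inequality---but the structure and the key idea are identical.
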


\begin{proof}

We apply the \textit{k-blobification} algorithm on $T$. Let the final collection $\calb$ of blobs contain $a$ cherry blobs and
$b$ edge blobs.  Since the prescaffold tree is a binary rooted tree with $a$ leaves, there
are at most $2a - 1$ edges (potentially there is a root edge).  
Taking everything at its most extreme, we see that the total
number of leaves, $n$ is at most
\[
n  \leq  (a + b)(2k-2)  + (k-1)(2a - 1)  =  (4a + 2b -1)(k-1) \leq (4a + 4b)k
\] 
where the first part comes from the contribution from each of the $a+b$ blobs, and the second term
is the leftover leaves.
The total number of blobs is $a + b$, which is greater than $n/4k$ from the above inequality.
\end{proof}

\begin{lemma}\label{lemma:birthday}
Let $S_1$ and $S_2$ be uniformly random subsets of $[n]$, each of size at least $\sqrt{n}$.
The probability that $S_1 \cap S_2 \neq \emptyset$ is at least $1-e^{-1}$.
\end{lemma}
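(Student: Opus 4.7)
The plan is to reduce to the case of fixed sizes $s_1, s_2 \geq \sqrt{n}$ by conditioning on $|S_1|$ and $|S_2|$: if the bound $1 - e^{-1}$ holds uniformly in such conditioning, it holds unconditionally, so it suffices to prove the estimate assuming $|S_1|=s_1$ and $|S_2|=s_2$ are fixed integers at least $\sqrt{n}$, and $S_1, S_2$ are chosen independently and uniformly from the subsets of $[n]$ of those sizes.

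Next I would compute the probability of disjointness directly. Conditioning further on the set $S_1$ (which does not matter by symmetry), the number of $s_2$-subsets of $[n]$ disjoint from $S_1$ is $\binom{n-s_1}{s_2}$, so
\[
\Pr[S_1 \cap S_2 = \emptyset] \;=\; \frac{\binom{n-s_1}{s_2}}{\binom{n}{s_2}} \;=\; \prod_{i=0}^{s_2 - 1}\frac{n-s_1-i}{n-i}.
\]
Then I would bound each factor by noting $\frac{n-s_1-i}{n-i} = 1 - \frac{s_1}{n-i} \leq 1 - \frac{s_1}{n}$ and applying the inequality $1-x \leq e^{-x}$ to obtain
\[
\Pr[S_1 \cap S_2 = \emptyset] \;\leq\; \Bigl(1 - \tfrac{s_1}{n}\Bigr)^{s_2} \;\leq\; e^{-s_1 s_2 / n}.
\]

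Since $s_1, s_2 \geq \sqrt{n}$, the exponent satisfies $s_1 s_2 / n \geq 1$, giving $\Pr[S_1 \cap S_2 = \emptyset] \leq e^{-1}$, hence $\Pr[S_1 \cap S_2 \neq \emptyset] \geq 1 - e^{-1}$. There is no real obstacle here; this is a standard birthday-style computation, and the only thing worth being careful about is making sure the inequality $\frac{n-s_1-i}{n-i} \leq 1 - \frac{s_1}{n}$ is applied in the right direction (it holds because subtracting the same quantity from numerator and denominator of a proper fraction decreases the fraction, so replacing $n-i$ by $n$ in the denominator gives a valid upper bound on the factor).
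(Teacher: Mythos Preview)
Your proof is correct and follows essentially the same route as the paper: compute $\Pr[S_1\cap S_2=\emptyset]$ as a ratio of binomial coefficients, bound each factor of the resulting product by $1-\tfrac{s_1}{n}$, and conclude via $(1-x)\le e^{-x}$. The only cosmetic difference is that the paper first reduces to the extremal case $s_1=s_2=\sqrt{n}$ and then bounds $(1-\tfrac{1}{\sqrt{n}})^{\sqrt{n}}\le e^{-1}$ directly, whereas you keep $s_1,s_2$ general and use $s_1 s_2\ge n$ at the end; these are the same argument.
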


\begin{proof}
The probability that $S_1 \cap S_2 \neq \emptyset$ is clearly minimized when
both $S_1$ and $S_2$ have $\sqrt{n}$ elements.  In this case, the probability that
$S_1 \cap S_2 = \emptyset$
is given by the formula
\begin{eqnarray*}
\frac{ \binom{n - \sqrt{n}}{\sqrt{n}}}{ \binom{n}{\sqrt{n}}} & = &  \prod_{i = 1}^{\sqrt{n}} 
\left( 1 - \tfrac{\sqrt{n}}{n-i+1}\right)  \\
&  \leq  & \left(1 - \tfrac{1}{\sqrt{n}} \right)^{\sqrt{n}}  \\
&  \leq  &  e^{-1}
\end{eqnarray*}
This shows that the probability that $S_1 \cap S_2 \neq \emptyset$ is
at least $1-e^{-1}$.
\end{proof}

\begin{thm}\label{thm:blobn12}
Let $T_1$ and $T_2$ be two uniformly random trees on $n$ leaves among all trees with the
same tree shape (i.e. $T_2$ is a random leaf relabeling of $T_1$).
Then the expected size of $\mast(T_1,T_2)$ is at least $\sqrt{n}(1 - e^{-1})/4 $.
\end{thm}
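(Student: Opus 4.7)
The plan is to apply the greedy $k$-blobification algorithm to $T_1$ with $k = \lceil \sqrt{n} \rceil$. By Proposition~\ref{prop:blob}, this produces a blobification $\calb = \{B_1, \ldots, B_m\}$ of $T_1$ with $m \geq n/(4k)$ blobs, each of size at least $k \geq \sqrt{n}$. The choice of $k$ is tuned exactly so that every blob satisfies the hypothesis of Lemma~\ref{lemma:birthday}.

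Because the greedy blobification procedure depends only on the shape of the tree and not on its leaf labels, and because $T_2$ is obtained from $T_1$ by a uniformly random permutation of labels, the same collection of structural positions $P_1, \ldots, P_m$ in the underlying shape underlies both trees. Letting $B_i'$ denote the set of labels of $T_2$ at the positions constituting the $i$-th structural blob, one has $|B_i'| = |B_i| \geq \sqrt{n}$, and $B_i'$ is marginally distributed as a uniformly random subset of $[n]$ of the same size as $B_i$. Applying Lemma~\ref{lemma:birthday} with $B_i$ fixed then yields
\[
\Pr[B_i \cap B_i' \neq \emptyset] \geq 1 - e^{-1}
\]
for every $i$.

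For each index $i$ with $B_i \cap B_i' \neq \emptyset$, one picks a representative $s_i \in B_i \cap B_i'$ and forms $S = \{s_i : B_i \cap B_i' \neq \emptyset\}$. The crux is the claim that $S$ is an agreement set for $T_1$ and $T_2$. The justification is that selecting one leaf from each blob of a $k$-blobification yields an induced subtree equal to the scaffold tree, with each representative sitting at the scaffold position of its blob. Since the scaffold is determined by the shape alone, both $T_1|_S$ and $T_2|_S$ equal the scaffold with leaf label $s_i$ at the blob-$i$ position, so they coincide as labeled trees. Combining this with linearity of expectation and the bound on $m$ yields
\[
\E[\mast(T_1, T_2)] \;\geq\; \E[|S|] \;=\; \sum_{i=1}^{m} \Pr[B_i \cap B_i' \neq \emptyset] \;\geq\; m(1 - e^{-1}) \;\geq\; \frac{\sqrt{n}(1-e^{-1})}{4}.
\]

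The main obstacle is rigorously justifying the agreement claim, particularly for edge blobs. An edge blob may collect leaves from several small subgroups positioned at different points along a single prescaffold edge, so different choices of representative within the same blob sit at different physical locations in $T_1$, and independently in $T_2$. Showing that this does not disturb the induced subtree topology, and that the $s_i$'s land in structurally matched positions in $T_1|_S$ and $T_2|_S$, requires careful bookkeeping about how the bottom-up grouping step of the greedy algorithm imposes a consistent linear ordering of edge blobs along each prescaffold edge, so that each blob truly collapses to a single ``slot'' in the scaffold.
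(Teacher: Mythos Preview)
Your approach is essentially the paper's own: form the $\sqrt{n}$-blobification (which depends only on the tree shape, hence is identical for $T_1$ and $T_2$), match corresponding blobs, apply Lemma~\ref{lemma:birthday} to each matched pair, and combine linearity of expectation with the count from Proposition~\ref{prop:blob}. The paper's proof is no more detailed than yours on the step you flag as the ``main obstacle'': it simply asserts that choosing $\ell_i \in B_{1i}\cap B_{2i}$ yields $T_1|_S = T_2|_S$, with the resulting tree isomorphic to an induced subtree of the scaffold.

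Your concern about edge blobs is legitimate but resolvable, and you have already put your finger on the reason. Because the greedy algorithm groups the small subblobs along each prescaffold edge consecutively from the bottom up, each edge blob occupies a contiguous segment of that edge, and distinct edge blobs on the same edge occupy disjoint, linearly ordered segments. Any representative of the $i$-th blob therefore attaches within one fixed segment, and the topology of the induced tree $T|_S$ depends only on which segment each $s_i$ lies in, not on its precise position inside it. Since $s_i$ lies in the $i$-th structural blob in both $T_1$ and $T_2$, the two restrictions coincide as labeled trees. So there is no genuine gap here, only a verification the paper leaves implicit.
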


\begin{proof}
Consider the $\sqrt{n}$-blobification of $T_1$ and $T_2$,
which we denote by $\calb_1$ and $\calb_2$.  Since the trees have the same tree shape, 
this blobification has the same scaffold tree $T'$.  We can order the blobs in $\calb_1 = \{B_{11}, \ldots, B_{1s} \}$
and $\calb_2 =  \{B_{21}, \ldots, B_{2s} \}$ so that
$B_{1i}$ and $B_{2i}$ correspond to the same leaf in the scaffold tree $T'$.

If for each $i$, we had that $B_{1i} \cap B_{2i} \neq \emptyset$, we could take one
leaf $\ell_i \in B_{1i} \cap B_{2i}$, and let $S = \{\ell_1, \ldots, \ell_s\}$, we would
have $T_1|_S  = T_2|_S$ and this common agreement subtree would have the same shape
as the scaffold tree $T'$.  

Note that, since our trees are uniformly random among
all trees with a given fixed shape, the probability that $B_{1i} \cap B_{2i} \neq \emptyset$ is at least $1-e^{-1}$
by Lemma \ref{lemma:birthday}, so that the expected number of $i$ where 
$B_{1i} \cap B_{2i} \neq \emptyset$ is at least $s(1 - e^{-1})$.  This set of index
positions gives an agreement subtree of expected size at least $s(1 - e^{-1})$,
which will be isomorphic to an induced subtree of the scaffold tree.
Since $s \geq \sqrt{n}/4$ by Proposition \ref{prop:blob} we see that
the expected size of $\mast(T_1,T_2)$ is at least $\sqrt{n}(1 - e^{-1})/4 $.
\end{proof}

The same argument can be used to show that if $T_1$ and $T_2$ are uniformly
random trees among all trees that have the same $\sqrt{n}$-blobification, the expected 
value of $\mast(T_1,T_2)$ will also be at least $\sqrt{n}(1 - e^{-1})/4 $.


\section{Upper bound:  Eliminating Sampling Consistency}

In this section we generalize the result obtained from \cite{Bernstein2015} 
that if $T_1$ and $T_2$ are generated from any sampling consistent and 
exchangeable distribution on rooted binary trees with $n$ leaves,
the expected size of the $\mast$ is less than $\lambda \sqrt{n}$ (for some constant $\lambda >e\sqrt{2}$). 
We show that the result holds true even if we remove sampling consistency as one of the conditions. 
Since the distribution of random trees with the same shape is exchangeable, this will
prove an $O(\sqrt{n})$ bound on the expected size of the maximum agreement subtree
for uniformly random trees with the same shape.

Let $RB(n)$ denote the set of all rooted binary trees with $n$ leaves. 
For a set $S$ let $RB(S)$ denote the set of all rooted binary trees with leaf label
set $S$.

\begin{defn}
A distribution on $RB(n)$ is said to be \textit{exchangeable} 
if any two trees which differ only by a permutation of leaves have the same probability.
\end{defn}

For each $n = 1, 2, \ldots, $ we can consider a probability distribution $P_n$ on $RB(n)$.
We denote the probability of a tree $t \in RB(n)$ by $P_n[t]$.  
The notion of sampling consistency is concerned with a probability model
for random trees that describes probability distributions for random trees for all $n$.
For example, the uniform distribution on trees gives a probability distribution $P_n$
for each $n$, where $P_n[t] =  \tfrac{1}{(2n-3)!!}$ for all $t \in RB(n)$.
The property of sampling consistency is one that concerns the entire family
of probability distributions $P_n$, $n = 1, 2, \ldots$.

\begin{defn}
A distribution of random trees is said to satisfy \textit{sampling consistency} 
if for all $n$,  all  $s < n$, all $S \subseteq [n]$ with $|S| = s$, and all $t \in RB(S)$,
\[
P_s[t]  =   \sum_{T \in RB(n) : T|_{S} = t}  P_n[T].
\]
\end{defn}

In other words, in a sampling consistent distribution  if we take a random tree $T$
and restrict to a random subset of the leaves, the resulting tree has the same distribution
as if we had just chosen a random tree on that subset of leaves, directly.  
Our goal in this section is to remove the restriction of sampling consistency for
the following theorem from \cite{Bernstein2015}.

\begin{thm}\label{thm:bern}
Consider an 
exchangeable and sampling consistent distribution on rooted binary trees. 
Then for any $\lambda > e\sqrt{2}$ there is a value $m$ such that, 
for all $n\geq m$, 
\[
\E[\mast(T_1,T_2)]\leq \lambda \sqrt{n}
\]
where $T_1, T_2$ are sampled from this distribution. 
\end{thm}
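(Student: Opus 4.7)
The plan is a first-moment method. Define $X_k$ to be the number of $k$-subsets $S \subseteq [n]$ with $T_1|_S = T_2|_S$. Since $\mast(T_1,T_2) \geq k$ implies $X_k \geq 1$, Markov's inequality gives
\[
P(\mast(T_1,T_2) \geq k) \leq \E[X_k] = \binom{n}{k} q_k,
\]
where $q_k = P(T_1|_{[k]} = T_2|_{[k]})$ is the common value of $P(T_1|_S = T_2|_S)$ over $|S| = k$, by exchangeability. Sampling consistency enters by making $T_1|_{[k]}$ and $T_2|_{[k]}$ independent samples from $P_k$, so $q_k = \sum_{t \in RB(k)} P_k[t]^2$.

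The crux is to bound $q_k$ using only exchangeability of $P_k$. Writing $p_\sigma$ for the common probability of a labeled tree of shape $\sigma$ and $N_\sigma = k!/a(\sigma)$ for the number of such labelings, the normalization $\sum_\sigma N_\sigma p_\sigma = 1$ forces $p_\sigma \leq 1/N_\sigma$. Every rooted binary tree on $k$ leaves has at most $2^{k-1}$ automorphisms (at most one swap per internal node), giving $N_\sigma \geq k!/2^{k-1}$ and hence $\max_\sigma p_\sigma \leq 2^{k-1}/k!$. Therefore
\[
q_k = \sum_\sigma N_\sigma p_\sigma^2 \leq \max_\sigma p_\sigma \cdot \sum_\sigma N_\sigma p_\sigma \leq \frac{2^{k-1}}{k!}.
\]

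Plugging this into $\E[X_k] \leq \binom{n}{k} \cdot 2^{k-1}/k!$ and applying Stirling's approximation yields $\E[X_k] \leq \frac{1}{4\pi k}\left(\frac{2 n e^2}{k^2}\right)^k$, which decays super-geometrically once $k > e\sqrt{2n}$. To turn this into a bound on the expectation, I would fix $\lambda' \in (e\sqrt{2}, \lambda)$, set $K = \lceil \lambda' \sqrt{n} \rceil$, and split
\[
\E[\mast(T_1,T_2)] = \sum_k P(\mast(T_1,T_2) \geq k) \leq K + \sum_{k > K} \E[X_k];
\]
for $k > K$ the base $\frac{2e^2 n}{k^2}$ is bounded above by $2e^2/\lambda'^2 < 1$, so the tail is dominated by a geometric series and is exponentially small in $\sqrt{n}$. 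The main obstacle I anticipate is the endgame bookkeeping: one must tune $\lambda'$ close enough to $e\sqrt{2}$ to keep the tail harmless while leaving $K \leq \lambda \sqrt{n}$, and verify the Stirling estimate uniformly in $k \geq K$, so that the threshold $m$ in the theorem statement is genuinely explicit in $\lambda$ and not dependent on $k$.
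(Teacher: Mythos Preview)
Your proposal is correct and follows essentially the same route as the paper (and the original argument in \cite{Bernstein2015} that the paper is reproducing and extending): a first-moment bound $P[\mast\geq s]\leq\binom{n}{s}\sum_{t}P_s[t]^2$, the inequality $\sum_t P_s[t]^2\leq 2^{s-1}/s!$, and Stirling to get exponential decay past $e\sqrt{2n}$. You even supply the automorphism-count proof of the $2^{s-1}/s!$ bound that the paper merely cites; your endgame via splitting $\E[\mast]=\sum_k P(\mast\geq k)$ at $K=\lceil\lambda'\sqrt n\rceil$ is a slightly cleaner variant of the paper's ``$\mast\leq n$ and $P(\mast\geq\lambda\sqrt n)$ is exponentially small'' finish, but the content is the same.
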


Let $P_n$ be an exchangeable distribution on $RB(n)$.  Since we only consider a fixed
value of $n$, we do not have sampling consistency.
To prove an analogue of Theorem \ref{thm:bern} without sampling consistency depends
on defining some new probability distributions on $RB(s)$ for $s < n$.  
Specifically, for any $s<n$, and $t \in RB(s)$ we define 
\[
P_s[t]=  \sum_{T \in RB(n) : T|_{[s]} = t}  P_n[T].
\]
We can also use the notation $P_s[t]=  P_n[ T|_{[s]} = t]$ to denote this same probability.

\begin{prop}\label{prop:main}
Let $P_n$ be an exchangeable distribution defined on $RB(n)$. Then for any $s<n$, 
$P_s$ satisfies exchangeability property on $RB(s)$. 
\end{prop}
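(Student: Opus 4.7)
The plan is to reduce exchangeability of $P_s$ to exchangeability of $P_n$ by lifting permutations on $[s]$ to permutations on $[n]$ that fix $[n] \setminus [s]$ pointwise, and then exhibiting a weight-preserving bijection between the sets of trees being summed in the two definitions of $P_s$.

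First I would set up notation. Fix $t_1, t_2 \in RB(s)$ that differ by a permutation $\sigma$ of $[s]$, so that $\sigma(t_1) = t_2$, where $\sigma$ acts on a leaf-labeled tree in the obvious way (relabel leaf $i$ as $\sigma(i)$). Extend $\sigma$ to a permutation $\tilde{\sigma}$ of $[n]$ by declaring $\tilde{\sigma}(j) = j$ for all $j \in [n] \setminus [s]$. Then $\tilde{\sigma}$ acts on $RB(n)$ by relabeling leaves.

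Next I would verify the key compatibility between restriction and relabeling: for any $T \in RB(n)$,
\[
\tilde{\sigma}(T)|_{[s]} = \sigma\bigl(T|_{[s]}\bigr).
\]
This is immediate because $\tilde{\sigma}$ fixes $[n] \setminus [s]$ setwise (in fact pointwise), so the same leaves are deleted when forming the restriction, and the surviving leaves in $[s]$ are relabeled by $\sigma$. In particular, $T|_{[s]} = t_1$ if and only if $\tilde{\sigma}(T)|_{[s]} = t_2$, so $T \mapsto \tilde{\sigma}(T)$ is a bijection between $\{T \in RB(n) : T|_{[s]} = t_1\}$ and $\{T \in RB(n) : T|_{[s]} = t_2\}$.

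Finally I would conclude using exchangeability of $P_n$, which gives $P_n[T] = P_n[\tilde{\sigma}(T)]$ for every $T$. Summing,
\[
P_s[t_1] = \sum_{T|_{[s]} = t_1} P_n[T] = \sum_{T|_{[s]} = t_1} P_n[\tilde{\sigma}(T)] = \sum_{T'|_{[s]} = t_2} P_n[T'] = P_s[t_2],
\]
where in the penultimate step I reindexed by $T' = \tilde{\sigma}(T)$. There is no real obstacle here; the only point worth being careful about is verifying the compatibility identity between restriction and relabeling, which relies on the fact that $\tilde{\sigma}$ was chosen to act as the identity outside $[s]$.
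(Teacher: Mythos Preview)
Your proof is correct and follows essentially the same approach as the paper: both extend the leaf permutation on $[s]$ to one on $[n]$ fixing $[n]\setminus[s]$ pointwise, observe that restriction to $[s]$ commutes with this action, and then use exchangeability of $P_n$ to match the two sums term by term. Your write-up is in fact a bit more explicit about the compatibility identity and the reindexing step, but the argument is the same.
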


\begin{proof}
Let $t$ and $ t'$ be two trees in $RB(s)$ with same tree shape, and let $s < n$.  
By definition, $P_s[t]=P_n[T|_{[s]}=t]$ and 
$P_s[t']=P_n[T|_{[s]}=t']$. We define a bijection $\phi:[s]\rightarrow [s] $ from $[s]$ 
to itself such that $\phi(t)=t'$ and extend the map $\phi:[n]\rightarrow [n]$ from $[n]$ to itself
with $\phi(a)=a$, for all $a>s$.

So, for any two trees $T,T'$ in  $RB(n)$ with $T|_{[s]}=t$ and $T'|_{[s]}=t'$, 
we have
\[
\phi(T)|_{[s]}=\phi(t)=t'\text{ and } \phi^{-1}(T)|_{\{1,2,...,s\}}=\phi^{-1}(t')=t.
\]
Hence $T|_{[s]}=t$ if and only if $\phi(T)|_{[s]}=t'$ since
any bijection from $[n]$ to $[n]$ induces a bijection from $RB(n)$ to $RB(n)$.
Also, as $T$ and $\phi(T)$ have the same tree shape and $P_n$ is exchangeable, we have $P_n[T]=P_n[\phi(T)].$
Hence we can conclude that $P_s[t]=P_s[t']$.
\end{proof}

\begin{lemma}
Suppose that phylogenetic trees $T_1$ and $T_2$ in $RB(n)$
are randomly generated under a model that satisfies exchangeability. 
Then 
\[
P[MAST(T_1,T_2)\geq s]\leq \psi_{n,s}={n\choose s} \sum_{t\in RB(s)} P_s[t]^2,
\]
where $P_s[t]$ is defined as $P_s[t]=P_n[T|_{[s]}=t]$ for $t\in{RB(s)}$.
\end{lemma}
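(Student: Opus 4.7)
The plan is to execute a first-moment argument over subsets of size $s$. I would begin by observing that $\mast(T_1,T_2)\geq s$ holds if and only if there is some $S\subseteq [n]$ with $|S|=s$ for which $T_1|_S=T_2|_S$ (an agreement set of size larger than $s$ contains agreement sets of size exactly $s$ by restriction). The union bound then gives
\[
P[\mast(T_1,T_2)\geq s] \leq \sum_{S\subseteq [n],\, |S|=s} P[T_1|_S=T_2|_S],
\]
and since $T_1,T_2$ are independent and identically distributed according to $P_n$, the law of total probability rewrites each summand as $\sum_{t\in RB(S)} P[T|_S=t]^2$.

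The main technical point, and the only real obstacle, is showing that for \emph{every} subset $S$ of size $s$ (not just $S=[s]$) one has
\[
\sum_{t\in RB(S)} P[T|_S=t]^2 \;=\; \sum_{t\in RB(s)} P_s[t]^2.
\]
For this I would fix a bijection $\sigma\colon S\to [s]$ and extend it to a permutation of $[n]$ in any convenient way. Exchangeability of $P_n$ yields $P_n[T]=P_n[\sigma(T)]$ for every $T\in RB(n)$, and since $T|_S=t$ if and only if $\sigma(T)|_{[s]}=\sigma(t)$, summing over $T\in RB(n)$ with $T|_S=t$ gives $P[T|_S=t]=P_s[\sigma(t)]$. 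The induced bijection $RB(S)\to RB(s)$, $t\mapsto \sigma(t)$, then reindexes the left-hand sum into the right-hand one. This step is essentially a restatement of the relabeling argument used for Proposition \ref{prop:main}.

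Combining these pieces, the inner sum is independent of $S$, so the $\binom{n}{s}$ terms coming from the union bound all contribute the same quantity, producing the claimed bound $\psi_{n,s}$. No further distributional estimates or structural tools are needed; once exchangeability has been leveraged to transfer the computation from arbitrary $S$ to $[s]$, the lemma reduces to a clean first-moment calculation.
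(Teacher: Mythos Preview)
Your proposal is correct and follows essentially the same first-moment/union-bound argument as the paper: both reduce $P[\mast(T_1,T_2)\geq s]$ to a sum over size-$s$ subsets $S$ of $P[T_1|_S=T_2|_S]$, expand via independence, and then invoke exchangeability to make all $\binom{n}{s}$ terms equal to $\sum_{t\in RB(s)} P_s[t]^2$. The only cosmetic difference is that the paper applies exchangeability at the level of the probabilities $P[X_S=1]=P[X_{[s]}=1]$ before expanding, whereas you expand first and then reindex via a bijection $\sigma$; both routes are equivalent and equally valid.
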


\begin{proof}
This theorem can be proved exactly the way Lemma 4.1 of \cite{Bryant2003}
is proved with the last equality following from the way we have defined 
$P_s[t]$ instead of using sampling consistency.  The details are included here
for completeness.

Given a subset $S$ of $[n]$ let 
\[
X_S=   \left\{
\begin{array}{ll}
      1, & \text{if } T_1|_S=T_2|_S \\
      0, & \text{otherwise.} \\
\end{array} 
\right. \]
The number of agreement subtrees with $s$ leaves for $T_1$ and $T_2$ is counted by
\[X^{(s)}=\sum_{S\subseteq [n]:|S|=s}X_S.\]
The event ${MAST(T_1,T_2)\geq s}$ is equivalent to the event ${X^{(s)}\geq 1}$, so 
\begin{eqnarray*}
P[MAST(T_1,T_2)\geq s] & = & P[X^{(s)}\geq 1]  \\
&  \leq  &  E[X^{(s)}]  \\
&  =  &  \sum_{S \subseteq[n]: |S|=s} E[X_S] \\
& =  &\sum_{S \subseteq[n]: |S|=s} P[X_S=1]  \\
&  =  & {n\choose s}P[X_{[s]}=1], 
\end{eqnarray*}
where the last equality is by exchangeability. Now, 
\begin{eqnarray*}
P[X_{[s]}=1] & = & P_n[T_1|_{[s]}=T_2|_{[s]}]\\
& = & \sum_{t\in RB(s)} P_n[T_1|_{[s]}=t \text{ and } T_2|_{[s]}=t] \\
& = & \sum_{t\in RB(s)} P_n[T_1|_{[s]}=t]^2  \\
& = & \sum_{t\in RB(s)} P_s[t]^2 
\end{eqnarray*}
where the last equality follows from the way we have defined $P_s[t]$. Upon substituting back for this term, we obtain the upper bound as stated in the lemma.
\end{proof}

We now state a proposition from \cite{Bernstein2015}.

\begin{prop}
\cite[Proposition 4.2]{Bernstein2015} 
Let $P_s$ be any exchangeable distribution on rooted binary trees with $s$ leaves. 
Then 
\[ 
\sum_{t\in RB(s)} P_s(t)^2\leq \frac{2^{s-1}}{s!}.
\] 
\end{prop}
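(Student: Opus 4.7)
The plan is to exploit exchangeability to reduce the sum to an optimization over the probability simplex indexed by tree shapes, then bound the optimum by a combinatorial fact about automorphism groups of rooted binary tree shapes.

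First, I would group the trees in $RB(s)$ by their unlabeled shape. For each shape $\tau$ of a rooted binary tree on $s$ leaves, let $N_\tau$ be the number of labeled trees in $RB(s)$ of shape $\tau$, and let $p_\tau$ be the common value of $P_s(t)$ for $t$ of shape $\tau$, which is well-defined by exchangeability. Then
\[
\sum_{t\in RB(s)} P_s(t)^2 \;=\; \sum_\tau N_\tau \, p_\tau^2, \qquad \sum_\tau N_\tau \, p_\tau = 1.
\]
Setting $q_\tau := N_\tau p_\tau$, the sum equals $\sum_\tau q_\tau^2/N_\tau$ and $(q_\tau)$ lies in the probability simplex $\Delta$ over shapes.

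Next, I would observe that $q\mapsto \sum_\tau q_\tau^2/N_\tau$ is strictly convex, so its maximum over $\Delta$ is attained at a vertex $\mathbf{1}_\tau$, giving
\[
\sum_{t\in RB(s)} P_s(t)^2 \;\leq\; \max_\tau \frac{1}{N_\tau} \;=\; \frac{1}{\min_\tau N_\tau}.
\]
It therefore suffices to prove $N_\tau \geq s!/2^{s-1}$ for every shape $\tau$; equivalently, $|\mathrm{Aut}(\tau)| \leq 2^{s-1}$, via the orbit--stabilizer identity $N_\tau \cdot |\mathrm{Aut}(\tau)| = s!$ for the action of the symmetric group $S_s$ on labelings of $\tau$.

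The main (though still short) step is this automorphism bound. Any automorphism of a rooted binary tree $\tau$ is determined by, at each internal vertex, a choice of whether to swap its two child subtrees; this embeds $\mathrm{Aut}(\tau)$ into $(\zz/2\zz)^{s-1}$, since a rooted binary tree with $s$ leaves has exactly $s-1$ internal vertices. I expect this combinatorial step to be the place where a reader wants the most care, as it is the only point where the specific structure of rooted binary trees enters; the rest is essentially a one-line optimization over the simplex. As a sanity check, equality in the proposition is achieved when $P_s$ is uniform on the labelings of the most symmetric (balanced) shape, the unique shape minimizing $N_\tau$.
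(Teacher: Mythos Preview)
Your argument is correct. The paper does not actually prove this proposition; it merely quotes it from \cite{Bernstein2015}, so there is no in-paper proof to compare against. Your route---grouping by shape, reducing via $q_\tau = N_\tau p_\tau$ to maximizing the convex function $\sum_\tau q_\tau^2/N_\tau$ over the simplex, and then bounding $\min_\tau N_\tau$ through the automorphism count---is clean and self-contained.

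One small remark on the step you yourself flag: the sentence ``this embeds $\mathrm{Aut}(\tau)$ into $(\zz/2\zz)^{s-1}$'' is morally right but a bit loose as stated, since an automorphism can move the internal vertices themselves, so ``the choice at each internal vertex'' needs a reference frame. The quickest airtight version is the obvious induction on $s$: if the two root subtrees $\tau_L,\tau_R$ (with $a$ and $b=s-a$ leaves) are non-isomorphic then $|\mathrm{Aut}(\tau)| = |\mathrm{Aut}(\tau_L)|\cdot|\mathrm{Aut}(\tau_R)| \le 2^{a-1}2^{b-1}=2^{s-2}$, while if they are isomorphic (so $a=b=s/2$) then $|\mathrm{Aut}(\tau)| = 2\,|\mathrm{Aut}(\tau_L)|^2 \le 2\cdot 2^{s-2}=2^{s-1}$. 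This also makes transparent that equality $|\mathrm{Aut}(\tau)|=2^{s-1}$, and hence equality in the proposition, occurs exactly for the fully balanced shape when $s$ is a power of $2$, matching your sanity check.
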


Now we can combine these results to deduce the strengthened version of 
Theorem \ref{thm:bern} that does not require sampling consistency.

\begin{thm}\label{thm:last}
 Then for any $\lambda > e\sqrt{2}$ there is a 
value $m$ such that, for all $n\geq m$, 
\[
\E[\mast(T_1,T_2)]\leq \lambda \sqrt{n}.
\]
where $T_1$ and $T_2$ are sampled from any exchangeable distribution on $RB(n)$.
\end{thm}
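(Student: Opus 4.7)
The plan is to combine the three preceding results of this section into a single tail bound and then integrate the tail to control the expectation. By Proposition \ref{prop:main}, the induced distribution $P_s$ on $RB(s)$ is exchangeable, even though $P_n$ need not be sampling consistent. Therefore the preceding lemma gives
\[
P[\mast(T_1,T_2)\geq s]\;\leq\;\binom{n}{s}\sum_{t\in RB(s)} P_s[t]^2,
\]
and Proposition 4.2 of \cite{Bernstein2015}, applied to the exchangeable distribution $P_s$, yields
\[
P[\mast(T_1,T_2)\geq s]\;\leq\;\binom{n}{s}\cdot\frac{2^{s-1}}{s!}\;\leq\;\frac{2^{s-1}\, n^s}{(s!)^2}.
\]

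Next I would write $\E[\mast(T_1,T_2)] = \sum_{s\geq 1} P[\mast(T_1,T_2)\geq s]$ and split the sum at $s_0 = \lceil\lambda\sqrt{n}\rceil$. Terms with $s\leq s_0$ contribute at most $s_0 \leq \lambda\sqrt{n}+1$, since each probability is at most $1$. For the tail $s>s_0$, I would apply Stirling's formula to the estimate above; writing $s = c\sqrt{n}$, a short computation gives
\[
\frac{2^{s-1} n^s}{(s!)^2}\;\leq\;\frac{1}{4\pi c\sqrt{n}}\left(\frac{2e^2}{c^2}\right)^{c\sqrt{n}}(1+o(1)).
\]
Since $\lambda>e\sqrt{2}$ is equivalent to $2e^2/\lambda^2<1$, each tail term is bounded by $\rho^{\,c\sqrt{n}}$ for some $\rho<1$ once $c\geq\lambda$, and summing the geometric-type tail from $s=s_0$ to $n$ gives a contribution that tends to $0$ as $n\to\infty$. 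Choosing $m$ large enough that this tail contribution is at most $1$ (say), one concludes $\E[\mast(T_1,T_2)]\leq \lambda\sqrt{n}$ for all $n\geq m$, exactly as in the sampling-consistent proof of Theorem \ref{thm:bern}.

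The only place where sampling consistency entered the argument of \cite{Bernstein2015} was in identifying the marginal distribution on $s$-leaf restrictions as an exchangeable distribution on $RB(s)$ to which Proposition 4.2 could be applied. Proposition \ref{prop:main} replaces that step verbatim: the definition $P_s[t] := P_n[T|_{[s]}=t]$ is still exchangeable on $RB(s)$, which is all the bound needs. The main (mild) obstacle is simply being careful in the Stirling estimate so that the threshold $e\sqrt{2}$ for $\lambda$ comes out correctly and the $\binom{n}{s}$ factor in the small-$s$ regime does not blow up the total; since the tail is super-exponentially small in $\sqrt{n}$, both issues are controlled by taking $m$ sufficiently large in terms of $\lambda - e\sqrt{2}$.
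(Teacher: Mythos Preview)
Your proposal is correct and follows essentially the same route as the paper: both combine Proposition~\ref{prop:main}, the preceding first-moment lemma, and \cite[Proposition~4.2]{Bernstein2015} to obtain $P[\mast(T_1,T_2)\geq s]\leq \binom{n}{s}\tfrac{2^{s-1}}{s!}$, and then use the Stirling bound $\tfrac{1}{4\pi s}\bigl(\tfrac{2e^2 n}{s^2}\bigr)^s$ to show the tail at $s=\lambda\sqrt{n}$ is super-exponentially small. The only cosmetic difference is that you sum the tail probabilities via $\E[\mast]=\sum_{s\geq 1}P[\mast\geq s]$, whereas the paper uses the single-threshold estimate $\E[\mast]\leq \lambda\sqrt{n}+n\cdot P[\mast\geq \lambda\sqrt{n}]$; in both versions one should, strictly speaking, first apply the bound at some $\lambda'\in(e\sqrt{2},\lambda)$ to absorb the additive $O(1)$ (or $o(\sqrt{n})$) slack into the gap $\lambda-\lambda'$.
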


\begin{proof}
This theorem can be proved exactly the way as Theorem 4.3 
in \cite{Bernstein2015} is proved as we have already shown that $P_s$ is exchangeable 
by Proposition \ref{prop:main}.

We explore the asymptotic behaviour of the quantity $\phi_{n,s}={n\choose s} \frac{2^{s-1}}{s!}$. Using the inequality ${n\choose s}\leq \frac{n^s}{s!}$ and Stirling's approximation, we have: 
$$\phi_{n,s}\leq \frac{1}{4\pi s}\left(\frac{2e^2 n}{s^2}\right)^s\theta(s) $$
where $\theta(s)\sim 1$. Hence, $\phi_{n,s}$ tends to zero as an exponenential function of $n$ as $n\rightarrow \infty$. Since $\phi_{n,s}\geq \psi_{n,s}$, we see that $P[\mast(T_1,T_2)\geq \lambda \sqrt{n}]$ tends to zero as an exponential function of $n$. Since $\mast(T_1,T_2)\leq n$, this implies that $E[\mast(T_1,T_2)]\leq \lambda \sqrt{n}$. 
\end{proof}

Now we can deduce the main result for trees with the same shape.

\begin{cor}\label{cor:generalupper}
Let $T_1$ and $T_2$ be generated from the uniform distribution on
rooted binary trees  with $n$ leaves with same tree shape (that is, $T_2$ is a random leaf relabeling of $T_1$).
Then for any $\lambda > e\sqrt{2}$ there is a value $m$ such that, for all $n\geq m$, 
$$E[\mast(T_1,T_2)]\leq \lambda \sqrt{n}.$$
\end{cor}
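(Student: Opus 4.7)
The plan is to reduce the corollary to Theorem~\ref{thm:last} by exhibiting an exchangeable distribution on $RB(n)$ from which $T_1$ and $T_2$ are iid samples. Fix any tree shape $\sigma$ and let $Q_\sigma$ denote the uniform distribution on the set of labeled trees in $RB(n)$ of shape $\sigma$. Then $Q_\sigma$ is exchangeable: if two trees in $RB(n)$ differ by a permutation of their leaf labels they share the same shape, so they are either both in the support of $Q_\sigma$ with common probability $1/N_\sigma$ (where $N_\sigma$ is the number of labeled trees of shape $\sigma$) or both have probability zero.

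I will next argue that, conditional on $T_1$ having shape $\sigma$, the pair $(T_1,T_2)$ is distributed as two independent draws from $Q_\sigma$. A direct orbit--stabilizer calculation shows that for any fixed tree $t$ of shape $\sigma$ and any uniformly random permutation $\pi$ of $[n]$, the image $\pi(t)$ is uniform over shape-$\sigma$ trees, since $|\{\pi \in S_n : \pi(t) = t'\}| = |\mathrm{Stab}(t)|$ is constant in $t'$. Hence the conditional distribution of $T_2 = \pi(T_1)$ given $T_1 = t$ is $Q_\sigma$, independent of $t$. Moreover, because $\mast$ is invariant under simultaneously relabeling both arguments, $\E[\mast(T_1,T_2)]$ depends only on the shape of $T_1$, so without loss of generality I may take $T_1 \sim Q_\sigma$. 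Combining these two observations, $T_1$ and $T_2$ are iid samples from $Q_\sigma$.

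Applying Theorem~\ref{thm:last} to the exchangeable distribution $Q_\sigma$ then gives $\E[\mast(T_1,T_2) \mid T_1 \text{ has shape } \sigma] \leq \lambda\sqrt{n}$ for all $n \geq m$, with $m$ depending only on $\lambda$ and not on $\sigma$. Averaging over the marginal distribution of the shape yields the unconditional bound claimed in the corollary. The only real obstacle in this plan is verifying that the joint law of $(T_1,T_2)$ coincides with two iid samples from an exchangeable distribution, which is handled by the orbit--stabilizer argument above; beyond that, the corollary is a direct consequence of Theorem~\ref{thm:last}.
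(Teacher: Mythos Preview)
Your proof is correct and follows essentially the same approach as the paper: both observe that the uniform distribution $Q_\sigma$ on labeled trees of a fixed shape $\sigma$ is exchangeable and then invoke Theorem~\ref{thm:last}. You simply supply more detail than the paper does---the orbit--stabilizer argument that $T_2=\pi(T_1)$ is $Q_\sigma$-distributed independently of $T_1$, the relabeling-invariance reduction allowing $T_1\sim Q_\sigma$, and the observation that the threshold $m$ in Theorem~\ref{thm:last} depends only on $\lambda$ and not on the particular exchangeable distribution (hence not on $\sigma$)---all of which the paper leaves implicit in its one-line proof.
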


\begin{proof}
This follows immediately from Theorem \ref{thm:last} since the uniform distribution on trees with the same
shape is exchangeable.  
\end{proof}

Combining Theorem \ref{thm:blobn12} and Corollary \ref{cor:generalupper} we deduce
the main result of the paper.

\begin{thm}
Let $T_1$ and $T_2$ be generated from the uniform distribution on
rooted binary trees  with $n$ leaves with same tree shape 
(that is, $T_2$ is a random leaf relabeling of $T_1$).
Then
\[
E[\mast(T_1,T_2)] =  \Theta(\sqrt{n}).
\]
\end{thm}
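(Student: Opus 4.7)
The plan is to derive the stated $\Theta(\sqrt{n})$ asymptotic by matching the lower and upper bounds already established in the previous two sections, and observing that the distribution in question --- uniform on leaf-labelings of a fixed tree shape --- satisfies the hypotheses of both. In other words, the proof is a one-line combination: the lower bound comes from Theorem \ref{thm:blobn12} and the upper bound from Corollary \ref{cor:generalupper}.

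For the lower direction, I would invoke Theorem \ref{thm:blobn12} verbatim. The setup of that theorem is exactly the one at hand: $T_2$ is obtained from $T_1$ by a uniformly random leaf relabeling, so the two trees share their greedy $\sqrt{n}$-blobification as unlabeled objects. The argument then picks one common element from each corresponding pair of blobs, invoking Lemma \ref{lemma:birthday} (each pair succeeds with probability at least $1-e^{-1}$) and Proposition \ref{prop:blob} (there are at least $\sqrt{n}/4$ pairs) to conclude that $\E[\mast(T_1,T_2)] \geq \sqrt{n}(1-e^{-1})/4$.

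For the upper direction, I would invoke Corollary \ref{cor:generalupper}. Its only hypothesis is exchangeability of the joint distribution on $RB(n)$, which is immediate here because permuting leaf labels uniformly yields a distribution in which any two trees with the same shape are equally likely. Combined with Theorem \ref{thm:last}, which removed the sampling consistency assumption of \cite{Bernstein2015}, this yields $\E[\mast(T_1,T_2)] \leq \lambda \sqrt{n}$ for any $\lambda > e\sqrt{2}$ and all $n$ sufficiently large.

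Because both of the underlying ingredients have already been proved, there is no genuine obstacle in the final step; the ``hard part'' was really the preparatory work --- the blobification decomposition and greedy construction in Section 2, and the generalization to non--sampling-consistent exchangeable distributions in Section 3. Once those are in hand, the assembled two-sided bound gives $\E[\mast(T_1,T_2)] = \Theta(\sqrt{n})$ immediately.
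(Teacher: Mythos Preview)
Your proposal is correct and matches the paper's own proof, which is simply the one-line observation that combining Theorem \ref{thm:blobn12} with Corollary \ref{cor:generalupper} yields the two-sided bound $\E[\mast(T_1,T_2)] = \Theta(\sqrt{n})$.
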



\section{Simulations with Blobification}

The blobification idea has the potential to be useful for proving
lower bounds on the expected size of the maximum agreement subtree in other
contexts.  For example, suppose we have a model for random trees on $n$ leaves
and we can show that the scaffold tree of the $\sqrt{n}$-blobification of a random 
tree has depth $\geq f(n)  $ with high probability $p>0$ that does not depend on $n$.
Then under this model, using Lemma \ref{lemma:birthday}, we see that
two random trees will have an agreement subtree of expected size at least $f(n)(1 - e^{-1})p^2$.
Such a tree would be obtained as a comb tree by comparing blobs that are matched along the
path from the root to the deepest leaf in each scaffold tree.  
Hence, understanding the distribution of the depth of the scaffold trees
in the $\sqrt{n}$-blobification could give improved lower bounds on the 
expected size of the maximum agreement subtree in some random tree models.

One specific application where this perspective might prove useful is for
uniformly randomly trees.  The current best lower bound for the expected size
of the maximum agreement subtree for two uniformly random trees on $n$ leaves
is $\Omega(n^{1/8})$ \cite{Bernstein2015}.  To see if this blobification idea
might be useful for improving the lower bound, we simulated a lower bound for the
depth of the scaffold tree of a uniformly random tree using the following greedy procedure.  

\begin{alg}[Greedy Comb Scaffold] ${} $  

Input:  A binary tree $T$ and an integer $k$.

Output:  A scaffold tree in shape of a comb, whose leaves correspond to blobs of size $\geq k$.
\begin{itemize}
\item  Set $u =  ()$.
\item  While $T$ has more than one leaf Do:
\begin{itemize}
\item  Let $T_1$ and $T_2$ be the left and right subtrees of the root in $T$.
\item  Append $\min(\#(T_1), \#(T_2))$ to $u$. 
\item  Set $T$ equal to the larger of $T_1$ and $T_2$.
\end{itemize} 
\item  Set $v = (0)$.
\item  While $u \neq ()$ do
\begin{itemize}
\item  If the last element of $v$ is greater than or equal to $k$, append the last element of $u$ to $v$.
\begin{itemize}
\item  Else, add the last element of $u$ to the last element of $v$.
\end{itemize}
\item  Delete the last element of $u$.  
\end{itemize}
\item Output $v$, a vector of sizes of blobs in $T$, all except the last one having size $\geq k$,
which have a scaffold that is a comb tree.
\end{itemize}
\end{alg}

Note that the length of the vector $v$ (or possibly the length minus $1$) gives the  
number of leaves in the greedy comb scaffold where all blobs will have size greater than $k$. 

\begin{figure} 
\begin{center}
  \resizebox{!}{5.5cm}{
\includegraphics{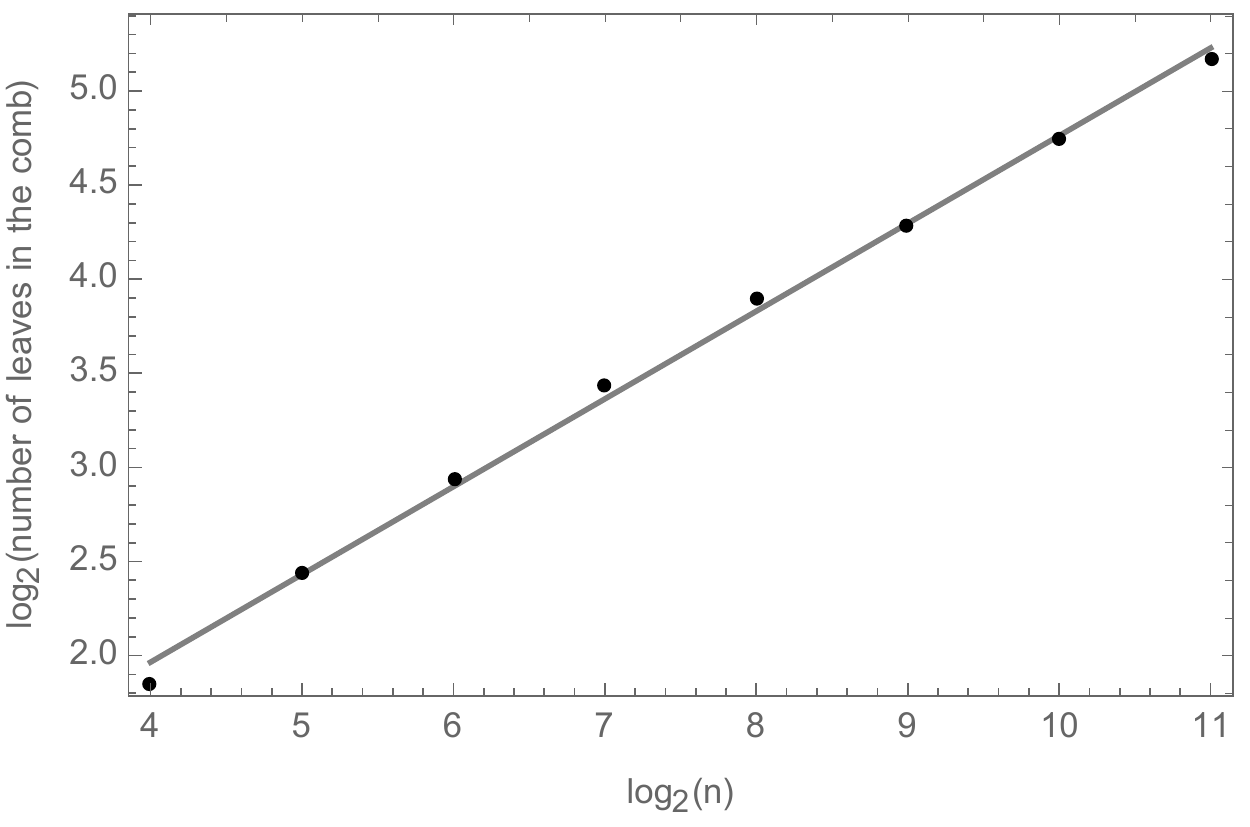}} 
\end{center}
\caption{\label{fig:fitline} Log-log plot of the simulated expected size of the greedy comb scaffold}
\end{figure}

We applied the greedy comb scaffold algorithm to uniformly random binary trees with $k = \sqrt{n}$
on $2^n$ leaves for $n = 4, \ldots, 11$, with $1000$ samples for each value of $n$.
The results of these simulations are displayed in the log-log plot of Figure \ref{fig:fitline}.    
The slope of the line of best fit is approximately $.466$. These data suggest 
that a strategy based on  blobification  could yield an $\Omega(n^{.466})$ lower
bound on the size of the maximum agreement subtree for uniformly random trees. This would be
a significant 
improvement on our estimates of the expected size of the maximal agreement 
subtree for uniformly random trees, given the current best known lower bound of $\Omega(n^{1/8})$.


\end{document}